\theoremstyle{plain}
\newtheorem{theorem}{Theorem}
\newtheorem{lemma}[theorem]{Lemma}
\newtheorem{cor}[theorem]{Corollary}
\newtheorem{prop}[theorem]{Proposition}
\numberwithin{equation}{section}
\newcommand{\pr}{^\prime}
\newcommand{\prd}{^{\prime 2}}
\DeclareRobustCommand\widecheck[1]{{\mathpalette\@widecheck{#1}}}
\def\@widecheck#1#2{%
    \setbox\z@\hbox{\m@th$#1#2$}%
    \setbox\tw@\hbox{\m@th$#1%
       \widehat{%
          \vrule\@width\z@\@height\ht\z@
          \vrule\@height\z@\@width\wd\z@}$}%
    \dp\tw@-\ht\z@
    \@tempdima\ht\z@ \advance\@tempdima2\ht\tw@ \divide\@tempdima\thr@@
    \setbox\tw@\hbox{%
       \raise\@tempdima\hbox{\scalebox{1}[-1]{\lower\@tempdima\box
\tw@}}}%
    {\ooalign{\box\tw@ \cr \box\z@}}}
\begin{document}

\author{V\' \i t\v ezslav Kala}
\address{Mathematisches Institut, Bunsenstr.~3-5, D-37073 G\"ottingen, Germany}
\address{Department of Algebra, Faculty of Mathematics and Physics, Charles University, Sokolov\-sk\' a 83, 18600 Praha~8, Czech Republic}
\email{vita.kala@gmail.com}

\title{Universal quadratic forms and elements of small norm in real quadratic fields}
 
\thanks{The author was supported by ERC Starting Grant 258713.}

\keywords{universal quadratic form, real quadratic number field, continued fraction}

\begin{abstract} For any positive integer $M$ we show that there are in\-finitely many real quadratic fields that do not admit $M$-ary universal quadratic forms (without any restriction on the parity of their cross coefficients).
\end{abstract}

\subjclass[2010]{11E12, 11R11}

\setcounter{tocdepth}{2}  \maketitle 

\section{Introduction} 

The study of universal quadratic forms can be said to have started in 1770 with the four square theorem of Lagrange, which one can formulate as the statement 
that the positive definite form $x^2+y^2+z^2+w^2$ is \textit{universal}, i.e., that it represents every positive integer. 
This has been followed by a large number of further results, however, most of them deal only with \textit{diagonal} forms $\sum a_ix_i^2$ or with \textit{classical} forms $\sum_{i\leq j} a_{ij}x_ix_j$ which have all cross-coefficients $a_{ij}$ even for $i\neq j$.
These common restrictions are somewhat unnatural, but usually greatly simplify the arguments and results --
compare for example the Conway-Schneeberger 15-theorem that a positive classical quadratic form is universal if and only if it represents 1, 2, \dots, 15 
with the Bhargava-Hanke \cite{BH} 290-theorem that a (possibly non-classical) form is universal if and only if it represents 1, 2, \dots, 290.

Let us also mention another general result in this direction, the 451-theorem of Rouse \cite{Rou} that an integral quadratic form represents all odd natural numbers if and only if it represents 1, 3, \dots, 451
(whose proof is conditional on a conjecture that three specific ternary forms represent all odd positive integers).

One can also consider totally positive definite forms over totally real number fields. Such a form is \textit{universal} if it represents all totally positive integers.
Again, much less is known in general than in the special case of classical forms, but for example 
Deutsch \cite{De} proved that the non-classical form $x^2+xy+y^2+z^2+zw+w^2$ is universal over $\mathbb Q(\sqrt 5)$, and
Sasaki \cite{Sa} classified all quaternary universal forms over $\mathbb Q(\sqrt{13})$ -- there are only two up to equivalence, and they are both non-classical. 

It may seem that the number of variables required by a universal quadratic form should grow with the discriminant of the (real quadratic) number field. This is not entirely true, as Kim \cite{Ki2} constructed an infinite family of fields of the form $\mathbb Q(\sqrt {n^2-1})$ admitting positive diagonal octonary universal  forms.

On the other hand, Blomer and the author \cite{BK} have recently shown that for each $M$ there exist infinitely many real quadratic fields which have no $M$-ary positive classical universal forms. 
The goal of the present short note is to extend this result to non-classical forms, to strengthen some of the statements (in particular Proposition \ref{no universal forms}), and to greatly simplify the proofs.
Our main result is the following theorem:

\begin{theorem}\label{main theorem}
For each positive integer $M$ there are infinitely many real quadratic fields $\mathbb Q(\sqrt D)$ which do not admit $M$-ary totally positive integral universal (possibly non-classical) quadratic forms.
\end{theorem}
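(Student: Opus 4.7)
I would adapt the continued-fraction argument of \cite{BK}, which handles classical forms, to the non-classical setting by substituting a Rayleigh-quotient estimate for the termwise trace bound.

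\textbf{Step 1 (extremal indecomposables).} I would pick an infinite family of squarefree $D$ for which the continued fraction of $\sqrt{D}$ (or of $\frac{1+\sqrt{D}}{2}$) contains a partial quotient tending to $\infty$. Via the classical correspondence (Perron) between convergents and indecomposable totally positive integers of $\mathcal{O}_{\mathbb{Q}(\sqrt{D})}$, this produces, for each such $D$, an indecomposable totally positive integer $\alpha=\alpha_D$ whose conjugate pair $(\sigma(\alpha),\sigma'(\alpha))$ is extremely unbalanced: $\sigma(\alpha) \to 0$, $\sigma'(\alpha) \to \infty$, and $N(\alpha)$ stays bounded by an absolute constant.

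\textbf{Step 2 (bounding the components of a representation).} Let $Q(x)=\sum_{i\le j} a_{ij} x_i x_j$ be any totally positive $M$-ary form over $\mathcal{O}_K$, possibly non-classical, and suppose $Q(v)=\alpha$ for some $v\in \mathcal{O}_K^M$. The symmetric Gram matrix of $Q$ in each real embedding is positive definite, so the Rayleigh quotient gives
\[
\sigma(v_i)^2 \;\le\; \frac{\sigma(\alpha)}{\lambda_{\min}(B^{\sigma})}, \qquad \sigma'(v_i)^2 \;\le\; \frac{\sigma'(\alpha)}{\lambda_{\min}(B^{\sigma'})}
\]
for every $i=1,\dots,M$, confining each $v_i$ to a Minkowski rectangle. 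In the classical case, the stronger pointwise inequality $\sigma(a_{ii})\sigma(v_i)^2 \le \sigma(\alpha)$ follows from evenness of the cross-coefficients; dropping evenness costs only a constant factor which the counting argument absorbs.

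\textbf{Step 3 (counting contradiction).} Fix $M$ and assume for contradiction that every $D$ in our family admits an $M$-ary universal form $Q_D$. Since $Q_D$ represents $1$, the eigenvalues $\lambda_{\min}(B^{\sigma}),\lambda_{\min}(B^{\sigma'})$ are bounded in a controllable way. Applied to the representation $Q_D(v)=\alpha_D$ of the extremal indecomposable of Step~1, the bounds of Step~2 place each $v_i$ inside a Minkowski rectangle whose area is $\asymp \sqrt{N(\alpha_D)}$ times a constant depending only on $Q_D$. Since the covolume of $\mathcal{O}_K$ in its Minkowski embedding is $\asymp \sqrt{D}$, this rectangle contains $\ll \sqrt{N(\alpha_D)}/\sqrt{D}$ algebraic integers, a count which vanishes as $D\to\infty$ in our family. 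For $D$ sufficiently large no admissible $v$ exists, contradicting universality.

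The \textbf{main obstacle} is uniformly bounding the Rayleigh constants $\lambda_{\min}(B^\sigma)$ across all candidate forms $Q_D$: the universality assumption (in particular the representability of $1$) is what makes such uniform bounds feasible. In the non-classical case we lose the termwise positivity that makes the classical trace inequality so clean, but the Rayleigh bound replaces it at the cost of only a multiplicative constant, which is innocuous for the Step~3 asymptotics.
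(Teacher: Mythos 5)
There is a genuine gap, and in fact the strategy as a whole cannot work. Your argument uses a \emph{single} extremal indecomposable $\alpha_D$ per field and tries to rule out all $M$-ary universal forms at once by a volume count; if that were valid, the same family of fields would work for every $M$ simultaneously. But the fields $\mathbb Q(\sqrt{n^2-1})$ have exactly the shape you require in Step 1 (the continued fraction $[\,n-1;\overline{1,2n-2}\,]$ has a partial quotient $\asymp\sqrt D$, and $\alpha=n+\sqrt{n^2-1}$ is a totally positive indecomposable with $N(\alpha)=1$ and wildly unbalanced conjugates), yet by Kim's theorem \cite{Ki2} infinitely many of them admit octonary diagonal universal forms. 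So any argument that extracts a contradiction from one unbalanced element must fail for $M\geq 8$. The technical failure is in Step 3: the number of points of $\mathcal O_K$ in a long thin Minkowski rectangle is not bounded by $\mathrm{area}/\mathrm{covolume}$; that quotient tending to $0$ does not force the box to be empty (it always contains $0$, and more to the point the lattice points of small norm lie along the hyperbola $|xy|=\mathrm{const}$ that threads through precisely such unbalanced boxes -- for Kim's fields the components of an actual representation of $\alpha$ sit there). In addition, the "main obstacle" you flag is real and unresolved: representability of $1$ gives no lower bound on $\lambda_{\min}(B^{\sigma})$ uniform in $D$, so the constant in your rectangle is not controllable. (A smaller issue: a partial quotient merely tending to $\infty$ gives $|N(\alpha_i)|\asymp\sqrt D/u_{i+1}$, which is bounded only if $u_{i+1}\gg\sqrt D$.)

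The paper's route is structurally different and explains why the fields must depend on $M$: one needs not one but $M+1$ convergents $\alpha_i$ of moderately small norm (Proposition \ref{estimate norm} gives both upper \emph{and} lower bounds $\asymp\sqrt D/u_{i+1}$), chosen from a continued fraction with rapidly growing partial quotients so that the pairwise condition "$4\alpha_i\alpha_j\succeq c^2\Rightarrow c=0$" holds (Proposition \ref{kill off-diagonal}, using Lemma \ref{norm implies alpha} to identify any such $c$ with a convergent). Then Proposition \ref{no universal forms} applies Cauchy--Schwarz in each embedding to force the representing vectors of the $\alpha_i$ to be pairwise orthogonal in the associated lattice, so the rank exceeds $M$; Friesen's theorem supplies infinitely many squarefree $D$ with the prescribed period. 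If you want to salvage your approach, the key missing idea is this passage from "small norm" to "many mutually orthogonal vectors," which replaces the unreliable counting step entirely.
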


The basic idea is the same as in \cite{BK} -- by Proposition \ref{no universal forms} it suffices to produce $M+1$ suitable elements $\alpha_i=p_i+q_i\sqrt D$ of $\mathcal O_K$, 
which we do by considering finite approximations $p_i/q_i$ to the continued fraction for $\sqrt D$.
These elements have small norms of size approximately $\sqrt D$ (see Proposition \ref{estimate norm}), 
which means that they are hard to be non-trivially represented by a quadratic form, 
but surprisingly also the lower bound saying that their norms are not too small plays a key role in the proof.
An important difference from the previous arguments is that we are considering a different class of continued fractions, which (together with the aforementioned improved estimate on the norms of $\alpha_i$) allows us to bypass most of the delicate technical arguments of \cite{BK}.
However, the present paper does not entirely supersede the previous one, as it produces a much thinner sequence of suitable values of $D$. 

The estimates on the norms of $\alpha_i$ have a surprising Corollary \ref{corollary coefficients} on the sizes of certain coefficients of the continued fraction for $\sqrt D$. It is perhaps already known, but we have not found it in the literature.

The proof of Proposition \ref{no universal forms} is formulated in the language of lattices associated to quadratic forms. It is also common to consider $\mathcal O_K$-lattices that are not necessarily free as $\mathcal O_K$-modules. 
Our arguments work in this setting as well, and we state the resulting generalization of Theorem \ref{main theorem} as Corollary \ref{corollary lattices}.

\

Although Proposition \ref{no universal forms} holds for any totally real number field, we do not know how to extend the rest of the arguments to fields $K$ of higher degree over $\mathbb Q$, leaving open the tantalizing question of what happens in general -- we are actually not aware of any results concerning universal quadratic forms over $\mathcal O_K$. 

Surprisingly, the situation is very different over rings of $S$-integers, as Collinet \cite{Co} recently proved that the sum of 8 squares is universal over $\mathcal O_K[\frac 12]$ for any number field $K$.

\section*{Acknowledgments}

I wish to thank Valentin Blomer for a careful reading of a draft of the paper and for his very useful advice and suggestions.

\section{Universal forms} 

In this section let $K$ be a totally real number field of degree $N$ over $\mathbb Q$.
Let $\sigma_1=\mathrm{id}, \sigma_2, \dots, \sigma_N: K\hookrightarrow\mathbb R$ be the (distinct) real embeddings of $K$.
The norm of $a\in K$ is then $N(a)=\sigma_1(a)\cdots \sigma_N(a)$.
We write $a \succ b$ to mean $\sigma_i(a)>\sigma_i(b)$ for all $1\leq i\leq N$; $a\succeq b$ denotes $a\succ b$ or $a=b$.

\begin{prop}\label{t4}\label{no universal forms}
Assume that there exist totally positive elements $a_1, a_2, \dots, a_M\in\mathcal O_K$ such that for all $1 \leq i \not = j \leq M$ we have that 
if $4a_ia_j \succeq c^2$ for $c \in \mathcal{O}_K$, then $c= 0$. 

Then there are no universal totally positive $(M-1)$-ary quadratic forms over $\mathcal O_K$. 
\end{prop}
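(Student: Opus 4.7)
The plan is to argue by contradiction. Suppose $Q(x_1,\dots,x_{M-1})$ is a totally positive universal form over $\mathcal O_K$. By universality, each $a_i$ is represented by $Q$, so there exist vectors $v_1,\dots,v_M \in \mathcal O_K^{M-1}$ with $Q(v_i)=a_i$. Let $B$ be the associated symmetric bilinear form, defined by $Q(x+y)=Q(x)+Q(y)+2B(x,y)$. For a possibly non-classical form $B(v_i,v_j)$ need not lie in $\mathcal O_K$, but in any case $c_{ij} := 2B(v_i,v_j)=Q(v_i+v_j)-Q(v_i)-Q(v_j)\in\mathcal O_K$.

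Next I would apply the Cauchy--Schwarz inequality embedding by embedding. Since $Q$ is totally positive, for each real embedding $\sigma_k$ the form $\sigma_k(Q)$ is a positive definite real quadratic form on $\mathbb R^{M-1}$, so
\begin{equation*}
 \sigma_k(B(v_i,v_j))^2 \;\le\; \sigma_k(Q(v_i))\,\sigma_k(Q(v_j)) \;=\; \sigma_k(a_i)\sigma_k(a_j).
\end{equation*}
Multiplying by $4$ and running over all embeddings gives $4a_ia_j \succeq c_{ij}^2$. The hypothesis of the proposition now forces $c_{ij}=0$, hence $B(v_i,v_j)=0$ for all $i\ne j$.

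To finish, I would fix any one embedding $\sigma_k$ and look at the $M$ vectors $\sigma_k(v_1),\dots,\sigma_k(v_M)$ inside $\mathbb R^{M-1}$. They are pairwise orthogonal with respect to the positive definite bilinear form $\sigma_k(B)$, and each is nonzero because $\sigma_k(Q(v_i))=\sigma_k(a_i)>0$ (as $a_i$ is totally positive). Pairwise orthogonal nonzero vectors in a positive definite real inner product space are linearly independent, so we obtain $M$ linearly independent vectors in an $(M-1)$-dimensional real vector space, a contradiction.

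The only subtle point is the non-classical case, where $B(v_i,v_j)$ may be half-integral; I expect that to be the main place one has to be careful, but it is handled cleanly by the observation that $c_{ij}=2B(v_i,v_j)\in\mathcal O_K$ so that the hypothesis on $4a_ia_j\succeq c^2$ with $c\in\mathcal O_K$ (rather than with the usually stronger $a_ia_j\succeq c^2$) is exactly the right one to plug into Cauchy--Schwarz. Everything else is standard linear algebra over $\mathbb R$ once the arithmetic integrality issue has been absorbed into the factor of $2$.
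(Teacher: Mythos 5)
Your proof is correct and follows essentially the same route as the paper: apply Cauchy--Schwarz embedding by embedding to the cross terms $2B(v_i,v_j)\in\mathcal O_K$, use the hypothesis $4a_ia_j\succeq c^2\Rightarrow c=0$ to force orthogonality, and conclude that the $M$ representing vectors cannot fit in an $(M-1)$-dimensional space. The paper merely phrases this in the language of an $\mathcal O_K$-lattice $L\subset\mathbb R^n$ (partly to enable the later generalization to non-free lattices), but the substance is identical to your argument.
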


This result is similar to Proposition 4 in \cite{BK}, but we do not need items (1), (2), (3) from there. 
Also note that the assumption in former item (4) was $a_ia_j \succ c^2$. This change is only needed in order to obtain the result for all forms, not only the classical ones.

The general idea of the proof is the same as before, but we still give full details (which are somewhat different) and state things more explicitly.

\begin{proof}  
Assume that $Q(x_i)=\sum_{1\leq i\leq j\leq n} a_{ij}x_ix_j$ is a universal totally positive quadratic form over $\mathcal O_K$ and let $A=(b_{ij})$ be its matrix with $b_{ii}=a_{ii}$ and $b_{ij}=b_{ji}=a_{ij}/2$. 

Let $B(x_i, y_i)=\sum_{1\leq i, j\leq n} b_{ij}x_iy_j$ be the associated bilinear form and consider the corresponding $\mathcal O_K$-lattice $L$. 
By this we mean that $L\subset \mathbb R^n$ has an $\mathcal O_K$-basis $\ell_1, \dots, \ell_n$ 
and $B(x_i, y_i)=\left\langle\sum x_i\ell_i, \sum y_i\ell_i\right\rangle$, where $\langle\cdot, \cdot\rangle$ is the usual inner product on $\mathbb R^n$. 
Note that for $x_i, y_i\in\mathcal O_K$ we have $B(x_i, y_i)\in\frac 12\mathcal O_K$.

Since $Q$ is universal, for each totally positive integer $\alpha$ the lattice $L$ contains a vector $v(\alpha)$ that represents $\alpha$, i.e., $\alpha=\langle v(\alpha), v(\alpha)\rangle$. In particular,
for $1\leq i\leq M$ there is $v_i\in L$ such that $a_i=\langle v_i, v_i\rangle$. From now on, we can work only in terms of the lattice $L$.

\

Let us show that $v_i$ and $v_j$ are orthogonal for $i\neq j$ by computing the inner product $\langle v_i, v_j\rangle=:c/2$ with $c\in\mathcal O_K$. 
Using the Cauchy-Schwarz inequality for $v_i$ and $v_j$ we get
$$\frac{c^2}4=\langle v_i, v_j\rangle^2\leq \langle v_i, v_i\rangle\langle v_j, v_j\rangle=a_ia_j.$$ 
Since $Q$ and $L$ are totally positive definite, we also get that
$4\sigma_h(a_i)\sigma_h(a_j)\geq \sigma_h(c)^2$ for all $1\leq h\leq N$, i.e.,
$4a_ia_j \succeq c^2.$ But this implies $c=0$ by assumption, proving that $v_i$ and $v_j$ are orthogonal.

We see that the lattice $L\subset \mathbb R^n$ contains $M$ pairwise orthogonal elements, and so $n\geq M$.
\end{proof}

\section{Continued fractions}

In this section we collect some useful results on continued fractions.

Let $\gamma=[u_0, u_1, \dots]$ be an infinite continued fraction of a real number $\gamma > 0$ and let $p_i/q_i=[u_0, \dots, u_i]$ be its $i$th approximation ($u_i, p_i, q_i\in\mathbb N$). Then it is well-known (and easy to see) that  
$p_{i+1}=u_{i+1}p_i+p_{i-1}$ and  $q_{i+1}=u_{i+1}q_i+q_{i-1}$ 
(with initial conditions $p_0=u_0$, $p_1=u_1u_0+1$, $q_0=1$, $q_1=u_1$)
and 
$$\left\lvert \frac{p_i}{q_i} - \frac{p_{i+1}}{q_{i+1}} \right\rvert=
\frac {1}{q_iq_{i+1}}.$$

Also note that 
$$\frac{p_{2i}}{q_{2i}}<\frac{p_{2i+2}}{q_{2i+2}}<\gamma<\frac{p_{2i+1}}{q_{2i+1}}<\frac{p_{2i-1}}{q_{2i-1}}.$$

\begin{lemma}\label{estimate fraction}
We have $$\frac {1}{(u_{i+1}+2)q_{i}^2}<\left\lvert \frac{p_i}{q_i} - \gamma \right\rvert<
\frac {1}{u_{i+1}q_{i}^2}.$$
\end{lemma}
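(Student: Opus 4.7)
The plan is to prove both inequalities simultaneously from the exact formula for the approximation error. First I would introduce the tail of the continued fraction, $\gamma_{i+1}=[u_{i+1},u_{i+2},\dots]$. Since $\gamma_{i+1}=u_{i+1}+1/\gamma_{i+2}$ and $\gamma_{i+2}>1$, this gives the two-sided bound
$$u_{i+1}<\gamma_{i+1}<u_{i+1}+1.$$
Next, a standard induction on $i$ using the recursions $p_{j+1}=u_{j+1}p_j+p_{j-1}$, $q_{j+1}=u_{j+1}q_j+q_{j-1}$ (with the conventions $p_{-1}=1$, $q_{-1}=0$) and the identity $p_iq_{i-1}-p_{i-1}q_i=(-1)^{i-1}$ yields
$$\gamma=\frac{\gamma_{i+1}p_i+p_{i-1}}{\gamma_{i+1}q_i+q_{i-1}}, \qquad\text{hence}\qquad \left\lvert\gamma-\frac{p_i}{q_i}\right\rvert=\frac{1}{q_i(\gamma_{i+1}q_i+q_{i-1})}.$$

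With this identity in hand, both bounds fall out immediately by estimating the denominator. For the upper bound, use $\gamma_{i+1}>u_{i+1}$ and $q_{i-1}\ge 0$ to get $\gamma_{i+1}q_i+q_{i-1}>u_{i+1}q_i$, so the absolute value is smaller than $1/(u_{i+1}q_i^2)$. For the lower bound, combine $\gamma_{i+1}<u_{i+1}+1$ with the inequality $q_{i-1}\le q_i$, which is immediate from the recursion starting at $q_0=1$ since all $u_j\ge 1$ for $j\ge 1$. This gives $\gamma_{i+1}q_i+q_{i-1}<(u_{i+1}+1)q_i+q_i=(u_{i+1}+2)q_i$, so the absolute value exceeds $1/((u_{i+1}+2)q_i^2)$.

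I do not foresee a genuine obstacle here: this is a classical estimate, and the only step meriting a moment's thought is the derivation of the tail identity, which is entirely routine. The constant $+2$ (rather than $+1$) in the denominator of the lower bound is exactly the slack needed to absorb the worst case $q_{i-1}=q_i$, which occurs only when $i=1$ and $u_1=1$; a sharper version of the bound could be obtained for $i\ge 2$, but $(u_{i+1}+2)$ is more than sufficient for the subsequent applications.
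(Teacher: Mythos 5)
Your proof is correct, but it takes a different route from the paper's. The paper never introduces complete quotients: for the upper bound it compares $\gamma$ with the next convergent and uses $\lvert p_i/q_i-p_{i+1}/q_{i+1}\rvert=1/(q_iq_{i+1})$, and for the lower bound it compares with $p_{i+2}/q_{i+2}$, computing $\lvert p_i/q_i-p_{i+2}/q_{i+2}\rvert=u_{i+2}/(q_iq_{i+2})\ge 1/(q_i(q_{i+1}+q_i))$ and then expanding $q_{i+1}+q_i\le(u_{i+1}+2)q_i$ via the recursion. You instead derive the exact error formula $\lvert\gamma-p_i/q_i\rvert=1/\bigl(q_i(\gamma_{i+1}q_i+q_{i-1})\bigr)$ from the complete quotient $\gamma_{i+1}=[u_{i+1},u_{i+2},\dots]$ and the determinant identity, and then sandwich $u_{i+1}<\gamma_{i+1}<u_{i+1}+1$ together with $0\le q_{i-1}\le q_i$; all of these steps check out (including the strictness of both inequalities and the boundary cases $i=0,1$ via $q_{-1}=0$). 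In effect both arguments pass through the same intermediate bound $1/(q_i(q_{i+1}+q_i))$, so the constants obtained are identical; your version buys a single unified identity from which both bounds fall out at once and makes the source of the slack $+2$ transparent, while the paper's version stays entirely within the machinery it has already set up (the convergent recursions and the difference formula), at the cost of handling the two bounds by separate comparisons.
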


\begin{proof}
For the upper bound we have 
$$\left\lvert \frac{p_i}{q_i} - \gamma \right\rvert<
\left\lvert \frac{p_i}{q_i} - \frac{p_{i+1}}{q_{i+1}} \right\rvert=
\frac {1}{q_iq_{i+1}}<
\frac {1}{u_{i+1}q_{i}^2}.$$

We show the lower bound by repeatedly using the recurrence for $q_j$ as follows:
\begin{displaymath}
\begin{split}
\left\lvert \frac{p_i}{q_i} - \gamma \right\rvert & >
\left\lvert \frac{p_i}{q_i} - \frac{p_{i+2}}{q_{i+2}} \right\rvert=
\left\lvert \left(\frac{p_i}{q_i} - \frac{p_{i+1}}{q_{i+1}}\right)-\left( \frac{p_{i+1}}{q_{i+1}} - \frac{p_{i+2}}{q_{i+2}}\right) \right\rvert=\\
& =
\frac {1}{q_iq_{i+1}}- \frac {1}{q_{i+1}q_{i+2}}=
\frac{u_{i+2}}{q_iq_{i+2}}=\frac{u_{i+2}}{q_i(u_{i+2}q_{i+1}+q_{i})}\geq\\
& \geq
\frac{1}{q_i(q_{i+1}+q_{i})}=\frac{1}{q_i(u_{i+1}q_i+q_i+q_{i-1})}>\frac {1}{(u_{i+1}+2)q_{i}^2}.
\end{split}
\end{displaymath}
\end{proof}

Assume now that $\gamma=\sqrt D$ with squarefree $D>0$. 
This means that 
$\gamma=\sqrt D=[k, \overline{u_1, u_2, \dots, u_{s-1}, 2k}]$ is periodic with period $s$ and that the sequence $(u_1, u_2, \dots, u_{s-1})=(u_1, u_2, \dots, u_r, \dots, u_2, u_1)$ is symmetric, i.e., 
$u_{s-i}=u_i$ and the central element $u_r$ (for $r=\lceil(s-1)/2\rceil$) appears once or twice.
Note that  $u_0=k$, $u_{si}=2k$, and $u_{si+j}=u_j$ for $i>0$ and $j\geq 0$.

Friesen proved the following theorem which says that it is often possible to find such a $D$.

\begin{theorem}\cite{Fr}\label{friesen}
Let $(u_1, u_2, \dots, u_{s-1})$ be a fixed symmetric sequence of positive integers such that
$$q_{s-2}\ \ or\ \ \frac{q_{s-2}^2-(-1)^s}{q_{s-1}}$$ is even. 
Then there exist infinitely many $k$ such that $\sqrt D=[k, \overline{u_1, u_2, \dots, u_{s-1}, 2k}]$ for a squarefree $D$. 
\end{theorem}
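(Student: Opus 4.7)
The plan is to parametrize $D$ directly in terms of $k$ by imposing the desired continued-fraction shape, and then to extract infinitely many squarefree $D$ via a sieve.

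\emph{Matrix setup.} Let $M_i=\left(\begin{smallmatrix} u_i & 1 \\ 1 & 0 \end{smallmatrix}\right)$, so that the standard identity $M_0M_1\cdots M_n=\left(\begin{smallmatrix} p_n & p_{n-1} \\ q_n & q_{n-1} \end{smallmatrix}\right)$ holds with $u_0=k$. The product $T:=M_1M_2\cdots M_{s-1}$ is independent of $k$, and the palindromic hypothesis $u_i=u_{s-i}$ together with each $M_i$ being symmetric forces $T^t=M_{s-1}\cdots M_1=M_1\cdots M_{s-1}=T$. Writing $T=\left(\begin{smallmatrix} a & b \\ b & e \end{smallmatrix}\right)$ and expanding $M_0T$ yields $q_{s-1}=a$, $q_{s-2}=b$ (both independent of $k$), $p_{s-1}=kq_{s-1}+q_{s-2}$, $p_{s-2}=kq_{s-2}+e$; moreover $\det T=(-1)^{s-1}$ pins down $e=(q_{s-2}^2-(-1)^s)/q_{s-1}$.

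\emph{Formula for $D$ and the parity condition.} If $\sqrt D=[k,\overline{u_1,\ldots,u_{s-1},2k}]$, its complete quotient at position $s$ equals $\alpha_s=k+\sqrt D$, and the standard convergent identity
$$\sqrt D=\frac{\alpha_s p_{s-1}+p_{s-2}}{\alpha_s q_{s-1}+q_{s-2}}$$
collapses, upon separating rational and irrational parts, to $Dq_{s-1}=kp_{s-1}+p_{s-2}$, whence
$$D=k^2+\frac{2kq_{s-2}+e}{q_{s-1}}.$$
Integrality of $D$ is the linear congruence $2kq_{s-2}\equiv -e\pmod{q_{s-1}}$; since $\gcd(q_{s-1},q_{s-2})=1$, this is solvable iff $\gcd(2,q_{s-1})\mid e$. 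A short parity analysis --- using that $q_{s-1}$ and $q_{s-2}$ cannot both be even and that, when both are odd, the formula for $e$ forces $e$ itself to be even --- shows this is precisely Friesen's hypothesis. The valid $k$ then form an arithmetic progression $k\equiv k_0\pmod{m}$ with $m\in\{q_{s-1},q_{s-1}/2\}$.

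\emph{Verification of the expansion and squarefreeness of $D$.} For $k$ large in this AP, $D=k^2+O(k)$ gives $\lfloor\sqrt D\rfloor=k$, while $\alpha_s=k+\sqrt D$ satisfies $\alpha_s>1$ with conjugate in $(-1,0)$, so $\alpha_s$ is a reduced quadratic irrational with purely periodic expansion beginning $\lfloor\alpha_s\rfloor=2k$. An induction on $i=0,1,\ldots,s-1$ --- computing the complete quotients produced by iterating $\alpha\mapsto 1/(\alpha-\lfloor\alpha\rfloor)$ on $\sqrt D$ and matching them, via the identity above, against the values forced by $\alpha_s$ --- confirms the full expansion $[k,\overline{u_1,u_2,\ldots,u_{s-1},2k}]$. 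Finally, along the AP, $D$ is a quadratic polynomial in the AP parameter $t$ (with $k=k_0+tm$) whose leading behaviour is monic in $k$; for each prime $p\nmid m$, the monic shape prevents $p^2\mid D(k)$ from holding for every $k$ in the AP, while the finitely many primes $p\mid m$ are handled by direct check. A classical squarefree-sieve for one-variable quadratic polynomials then produces infinitely many squarefree values of $D$.

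\emph{Main obstacle.} The most delicate step is the squarefreeness: one must certify that no fixed prime square divides $D(k)$ identically along the AP, and then invoke a quadratic squarefree sieve. The continued-fraction induction is essentially bookkeeping, and both the matrix computation and the congruence analysis are entirely elementary.
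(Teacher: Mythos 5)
The paper does not prove this statement at all --- it is quoted from Friesen \cite{Fr} --- so there is no internal proof to compare with; your proposal in effect reconstructs Friesen's original argument, and its outline is correct. The matrix computation is right: symmetry of $T=M_1\cdots M_{s-1}$ gives $q_{s-1}=a$, $q_{s-2}=b$, $p_{s-1}=kq_{s-1}+q_{s-2}$, $p_{s-2}=kq_{s-2}+e$ with $e=(q_{s-2}^2-(-1)^s)/q_{s-1}\in\mathbb Z$, and your parity analysis is indeed equivalent to Friesen's hypothesis (solvability of $2q_{s-2}k\equiv -e\pmod{q_{s-1}}$ amounts to $\gcd(2,q_{s-1})\mid e$; when $q_{s-1}$ is odd the condition is automatic, and when $q_{s-1}$ is even it forces $q_{s-2}$ odd, so the condition is exactly ``$e$ even''). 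Two remarks on the remaining steps. First, the verification of the expansion is simpler than your induction on complete quotients: if $\theta$ denotes the value of the infinite continued fraction $[k,\overline{u_1,\dots,u_{s-1},2k}]$, then $\theta_s=k+\theta$ and the convergent identity gives $\theta^2q_{s-1}+\theta\bigl(kq_{s-1}+q_{s-2}-p_{s-1}\bigr)=kp_{s-1}+p_{s-2}$; the linear term vanishes by palindromy, so $\theta=\sqrt D$ with $D=k^2+(2kq_{s-2}+e)/q_{s-1}$, and uniqueness of simple continued fraction expansions finishes it without any bookkeeping. Second, the squarefree step is where your write-up is thinnest: you should record separability (the discriminant of $D(k)$ is $4(q_{s-2}^2-eq_{s-1})/q_{s-1}^2=4(-1)^s/q_{s-1}^2\neq0$), and for primes $p\mid m$ the ``direct check'' is not a check of $D$ as a function of $k\bmod p^2$ --- because of the division by $q_{s-1}$ it is not such a function --- but rather the observation that $D(k_0+tm)$, as an integer polynomial in $t$, has linear coefficient $2k_0m+2q_{s-2}m/q_{s-1}\equiv 2q_{s-2}$ or $q_{s-2}\pmod p$, a unit for odd $p\mid q_{s-1}$ (the prime $2$ needs a separate small computation), so $p^2$ cannot divide $D$ along the whole progression. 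With these details supplied, the classical Nagell--Estermann squarefree sieve for quadratic polynomials yields infinitely many squarefree values, and the proof is complete.
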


Note that $q_i$ are independent of $k$, and so the condition is well-defined.


\

Throughout the rest of the paper let $K=\mathbb Q(\sqrt D)$ with squarefree $D>0$.
Then $\mathcal{O}_K = \mathbb{Z}[\sqrt{D}]$ when $D \equiv 2, 3\pmod 4$ and $\mathcal{O}_K = \mathbb{Z}\left[\frac{1+\sqrt{D}}2\right]$ when $D \equiv 1\pmod 4$.
We denote the conjugate of $a\in K$ by $a^\prime$. The norm is just $N(a) = aa'$ and $a \succ b$ means that $a>b$ and $a\pr>b\pr$.

\

Let $$\alpha_i=p_i+q_i\sqrt D, \quad N(\alpha_i)=p_i^2-Dq_i^2.$$ 
The elements $\alpha_i$ clearly satisfy the recurrence $\alpha_{i+1}=u_{i+1}\alpha_i+\alpha_{i-1}$. 
Note that $\alpha_i$ is totally positive if and only if $i$ is odd.

To prove Theorem \ref{main theorem} we want to use Proposition \ref{no universal forms} where we take for $a_j$ some of the approximations $\alpha_i$ for a suitable value of $\sqrt D$.
The following proposition gives us crucial information about the approximate sizes of the norms of the elements $\alpha_i$, which we shall use several times in the following proofs. 
Note that when $D\equiv 1\pmod 4$ and $p_i$ and $q_i$ are both odd, we could consider $\alpha_i/2\in\mathcal O_K$ instead of $\alpha_i$,
which would improve the bounds by a factor of 4.

\begin{prop}\label{estimate norm} 
We have
$$\frac {2\sqrt D}{u_{i+1}+2.5}<|N(\alpha_i)|<\frac {2\sqrt D}{u_{i+1}-0.5}.$$
\end{prop}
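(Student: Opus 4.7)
The plan is to factor $N(\alpha_i) = p_i^2 - Dq_i^2 = (p_i - q_i\sqrt{D})(p_i + q_i\sqrt{D})$ and then to bound each factor using Lemma \ref{estimate fraction}. Writing
$$|N(\alpha_i)| \;=\; q_i^2\left|\frac{p_i}{q_i}-\sqrt D\right|\cdot\left(\frac{p_i}{q_i}+\sqrt D\right),$$
Lemma \ref{estimate fraction} controls the first factor: $q_i^2|p_i/q_i-\sqrt D|\in\bigl(\tfrac1{u_{i+1}+2},\tfrac1{u_{i+1}}\bigr)$. For the second factor I would use
$$\frac{p_i}{q_i}+\sqrt D \;=\; 2\sqrt D + \Bigl(\frac{p_i}{q_i}-\sqrt D\Bigr),$$
so that $p_i/q_i+\sqrt D$ lies in $\bigl(2\sqrt D-\tfrac{1}{u_{i+1}q_i^2},\,2\sqrt D+\tfrac1{u_{i+1}q_i^2}\bigr)$ again by Lemma \ref{estimate fraction}.

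Multiplying these two estimates, I would separate the two directions. For the \emph{upper} bound: when $i$ is even, $p_i/q_i<\sqrt D$ gives $p_i/q_i+\sqrt D<2\sqrt D$ and the bound $|N(\alpha_i)|<\tfrac{2\sqrt D}{u_{i+1}}\le \tfrac{2\sqrt D}{u_{i+1}-0.5}$ is immediate. When $i$ is odd one gets $|N(\alpha_i)|<\tfrac{2\sqrt D}{u_{i+1}}+\tfrac{1}{u_{i+1}^2q_i^2}$, and comparing to $\tfrac{2\sqrt D}{u_{i+1}-0.5}$ the required slack is $\tfrac{\sqrt D}{u_{i+1}(u_{i+1}-0.5)}$; so it suffices to verify the elementary inequality $\tfrac{u_{i+1}-0.5}{u_{i+1}\,q_i^2}\le\sqrt D$, which holds trivially since the left-hand side is $<1\le\sqrt D$.

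For the \emph{lower} bound the roles reverse. When $i$ is odd, $p_i/q_i+\sqrt D>2\sqrt D$ yields $|N(\alpha_i)|>\tfrac{2\sqrt D}{u_{i+1}+2}>\tfrac{2\sqrt D}{u_{i+1}+2.5}$. When $i$ is even, one obtains $|N(\alpha_i)|>\tfrac{2\sqrt D}{u_{i+1}+2}-\tfrac{1}{(u_{i+1}+2)u_{i+1}q_i^2}$, and one checks that the error term is dominated by the gap $\tfrac{\sqrt D}{(u_{i+1}+2)(u_{i+1}+2.5)}$, i.e.\ that $\tfrac{u_{i+1}+2.5}{u_{i+1}q_i^2}\le\sqrt D$; for $i\ge 1$ we have $q_i\ge 2$ (except possibly $q_1$), making the left-hand side at most $\tfrac{7/2}{4}<1$.

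The main technical nuisance I expect is the delicate bookkeeping in this final step, since the naive estimate $\tfrac{u_{i+1}+2.5}{u_{i+1}q_i^2}\le\sqrt D$ is tight when both $q_i$ and $u_{i+1}$ are very small; the chosen constant $2.5$ in the denominator is apparently just large enough to absorb the error term $\tfrac{1}{(u_{i+1}+2)u_{i+1}q_i^2}$ under the mild hypothesis $D\ge 2$, and making that comparison rigorous is where one has to be careful. Everything else reduces to plugging Lemma \ref{estimate fraction} into the factorization $N(\alpha_i)=(p_i-q_i\sqrt D)(p_i+q_i\sqrt D)$ and keeping track of signs according to the parity of $i$.
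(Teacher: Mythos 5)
Your proposal is essentially the paper's own proof: the paper likewise factors $|N(\alpha_i)|=(p_i+q_i\sqrt D)\,|p_i-q_i\sqrt D|$, bounds $p_i+q_i\sqrt D$ by $2q_i\sqrt D\pm\frac{1}{u_{i+1}q_i}$ and the other factor by Lemma \ref{estimate fraction}, and then absorbs the error terms $\frac{1}{u_{i+1}^2q_i^2}$ and $\frac{1}{u_{i+1}(u_{i+1}+2)q_i^2}$ into the constants $0.5$ and $2.5$; your parity split merely makes two of the four comparisons vacuous. The one soft spot in your bookkeeping is the even-index lower bound, where the relevant exceptional index is $i=0$ with $q_0=1$ (not $q_1$), so the needed inequality $\frac{u_{1}+2.5}{u_{1}q_0^2}<\sqrt D$ is not covered by your ``$q_i\ge 2$'' remark and is not automatic for small $D$ and $u_1$ --- but the paper's final displayed inequality implicitly requires exactly the same estimate, so this is a shared minor edge case rather than a difference in method.
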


\begin{proof}
Using Lemma \ref{estimate fraction}, we first see that 
$2q_i\sqrt D-\frac 1{u_{i+1}q_i}<p_i+q_i\sqrt D<2q_i\sqrt D+\frac 1{u_{i+1}q_i}$, and then we obtain the upper bound as follows:
$$|N(\alpha_i)|=(p_i+q_i\sqrt D)|p_i-q_i\sqrt D|<
\Bigl(2q_i\sqrt D+\frac 1{u_{i+1}q_i}\Bigr)\frac {1}{u_{i+1}q_{i}}=
\frac {2\sqrt D}{u_{i+1}}+\frac {1}{u_{i+1}^2q_{i}^2}<\frac {2\sqrt D}{u_{i+1}-0.5}.$$

The proof of the lower bound is similar:
$$|N(\alpha_i)|>
\Bigl(2q_i\sqrt D-\frac 1{u_{i+1}q_i}\Bigr)\frac {1}{(u_{i+1}+2)q_{i}}=
\frac {2\sqrt D}{u_{i+1}+2}-\frac {1}{u_{i+1}(u_{i+1}+2)q_{i}^2}>\frac {2\sqrt D}{u_{i+1}+2.5}.$$
\end{proof}

Conversely, we shall also need to know that every element of sufficiently small norm is one of the approximations $\alpha_i$.

\begin{lemma}\label{norm implies alpha}
a) Let $\mu\in\mathbb Z[\sqrt D]\setminus \mathbb Z$ be such that $0<|N(\mu)|<\frac{\sqrt D}{2}$. Then $\mu=n\alpha_i$ or $\mu=n\alpha_i\pr$ for some $i\geq 0$ and $n\in\mathbb Z$.

b) Let $D\equiv 1\pmod 4$ and $\mu\in\mathbb{Z}\left[\frac{1+\sqrt{D}}2\right]\setminus \mathbb Z$.
If $0<|N(\mu)|<\frac{\sqrt D}{8}$, then $\mu=n\alpha_i$ or $\mu=n\alpha_i\pr$ for some $i\geq 0$ and $n\in\frac 12\mathbb Z$.
\end{lemma}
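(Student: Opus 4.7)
The plan for part (a) is to reduce the hypothesis on $N(\mu)$ to the classical criterion (Legendre's theorem) that a sufficiently good rational approximation of $\sqrt D$ must be a continued fraction convergent. Write $\mu = p + q\sqrt D$ with $p, q \in \mathbb Z$. Since $\mu \notin \mathbb Z$ we have $q \neq 0$, and since each of $\pm \mu$ and $\pm \mu'$ has the form $n\alpha_i$ or $n\alpha_i'$ if and only if $\mu$ does, I may replace $\mu$ by a suitable one of these to assume $p, q > 0$. The case $p = 0$ is ruled out by the hypothesis, because then $|N(\mu)| = Dq^2 \geq D$, contradicting $|N(\mu)| < \sqrt D/2$ for any squarefree $D \geq 2$.

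Next I let $n = \gcd(p,q) > 0$ and write $p = np_0$, $q = nq_0$ with $\gcd(p_0, q_0) = 1$. From the factorization $|N(\mu)| = |p - q\sqrt D|\,(p + q\sqrt D)$ and the strict bound $p + q\sqrt D > q\sqrt D$ (valid since $p > 0$), I obtain
$$\left|\frac{p_0}{q_0} - \sqrt D\right| \;=\; \frac{|N(\mu)|}{q\,(p + q\sqrt D)} \;<\; \frac{\sqrt D/2}{q \cdot q\sqrt D} \;=\; \frac{1}{2q^2} \;\leq\; \frac{1}{2q_0^2}.$$
By Legendre's theorem, $p_0/q_0$ is a convergent of $\sqrt D$, say $p_0/q_0 = p_i/q_i$. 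Since both sides are in lowest terms with positive numerator and denominator, $p_0 = p_i$ and $q_0 = q_i$, and hence $\mu = n\alpha_i$, as required.

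For part (b), I reduce to part (a) by passing to $2\mu \in \mathbb Z[\sqrt D]$. Writing $\mu = (p + q\sqrt D)/2$ with $p \equiv q \pmod 2$, the hypothesis $\mu \notin \mathbb Z$ forces $q \neq 0$ (else $b = 0$ in any expression $\mu = a + b(1+\sqrt D)/2$), so $2\mu = p + q\sqrt D \notin \mathbb Z$. Moreover, $|N(2\mu)| = 4|N(\mu)| < \sqrt D/2$, so part (a) applies and gives $2\mu = m\alpha_i$ or $m\alpha_i'$ for some $m \in \mathbb Z$ and $i \geq 0$. Dividing by $2$, $\mu = (m/2)\alpha_i$ or $(m/2)\alpha_i'$ with $m/2 \in \tfrac12 \mathbb Z$.

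The only nontrivial ingredient is invoking Legendre's theorem; the rest is bookkeeping about signs and the $\gcd$ reduction. The main thing to get right is the translation from the norm bound $|N(\mu)| < \sqrt D/2$ to the approximation bound $|p_0/q_0 - \sqrt D| < 1/(2q_0^2)$, which is precisely where the strict threshold $\sqrt D/2$ in the hypothesis is used.
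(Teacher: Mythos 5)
Your proof is correct and follows essentially the same route as the paper: reduce to coprime positive coefficients, show the norm bound forces $\bigl|\frac{p_0}{q_0}-\sqrt D\bigr|<\frac{1}{2q_0^2}$, invoke Legendre's theorem (\cite[Theorem 184]{HW}) to identify a convergent, and handle part (b) by passing to $2\mu$. The only difference is cosmetic: your factorization $|N(\mu)|=|p-q\sqrt D|\,(p+q\sqrt D)$ with the bound $p+q\sqrt D>q\sqrt D$ treats both signs of $N(\mu)$ uniformly, whereas the paper splits into cases by the sign of the norm and argues via the roots of a quadratic polynomial.
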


\begin{proof}
The proof is essentially the same as the last part of the proof of Proposition 12 in \cite{BK}, so we give only a sketch. 

a) Let $\mu=x+y\sqrt D$ with $x, y\in\mathbb Z$ and $y\neq 0$. 
By factoring out $n=\pm \gcd(x, y)$, we can assume that $x$ and $y$ are coprime and $x>0$. Also, by replacing $\mu$ by its conjugate $\mu\pr$ if necessary, we can assume that also $y>0$.

We need to distinguish two cases depending on the sign of $N(\mu)$. 

\textit{Case 1:} $N(\mu)<0$, i.e., $x-y\sqrt D<0$ and $y^2D-x^2=|N(\mu)|<\frac{\sqrt D}{2}$.
Hence $y^2D-\frac{1}{2}\sqrt D -x^2<0$ and  
$\sqrt D$ lies between the roots of the quadratic polynomial $2y^2 T^2-T-2x^2$.
Thus 
$$\sqrt D<\frac{1+\sqrt{1+16x^2y^2}}{4y^2}<
\frac{1}{4y^2}+\frac{\sqrt{1+8xy+16x^2y^2}}{4y^2}=
\frac xy+\frac 1{2y^2}.$$
We also have $\frac xy<\sqrt D$ by assumption, and so we conclude that $|\frac xy-\sqrt D|<\frac 1{2y^2}$ which implies \cite[Theorem 184]{HW} that $\mu=\alpha_i$ for some $i$.

\textit{Case 2:} $N(\mu)>0$. This case is similar, and so we omit it (for details see \cite{BK}, proof of Proposition 12).

To prove b), just consider $2\mu\in\mathbb Z[\sqrt D]$ in part a).
\end{proof}

The two preceding results imply some basic information on the existence of large coefficients in the continued fraction for $\sqrt D$. We do not need this result in the paper, but it seems interesting in its own right.
Let us remind the reader that by $u\asymp D^\delta$ we here mean that given $\delta>0$, there are positive constants $c_\delta, d_\delta$ (independent of $u$ and $D$) such that $c_\delta D^\delta< u<d_\delta D^\delta$.

\begin{cor}\label{corollary coefficients}
Let $D$ be a squarefree positive integer with $\sqrt D=[k, \overline{u_1, u_2, \dots, u_{s-1}, 2k}]$ and let $\alpha_i$ be as above. 
Let $0<\varepsilon<1/2$ and assume that $u_i\asymp D^{1/2-\varepsilon}$ for some $i$. 
Then one of the following is true: 
\begin{enumerate}
\item $\alpha_{i-1}$ is divisible by a prime ramified in $\mathbb Q(\sqrt D)$, or
\item for each $m\in\mathbb N$ with $m\varepsilon<1/2$ there is $j(m)\in\mathbb N$ such that $u_{j(m)}\asymp D^{1/2-m\varepsilon}$.
\end{enumerate} 

If $D=P\equiv 1\pmod 4$ is a prime, then {\rm (1)} never happens, and so {\rm (2)} holds.
\end{cor}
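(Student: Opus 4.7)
The overall strategy is to exploit Proposition \ref{estimate norm} in both directions: the hypothesis $u_i\asymp D^{1/2-\varepsilon}$ pins down the size of $|N(\alpha_{i-1})|$, and conversely, producing an element whose norm is $\asymp D^{m\varepsilon}$ and which coincides up to a bounded integer scalar with some approximation $\alpha_{j-1}$ forces $u_j \asymp D^{1/2-m\varepsilon}$. The natural candidate element is $\alpha_{i-1}^m$.

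In detail, Proposition \ref{estimate norm} applied at index $i-1$ gives
\[
|N(\alpha_{i-1})| \asymp \frac{\sqrt D}{u_i} \asymp D^{\varepsilon}.
\]
Fix $m\in\mathbb N$ with $m\varepsilon<1/2$ and set $\mu:=\alpha_{i-1}^m\in\mathbb Z[\sqrt D]\setminus\mathbb Z$. Then $|N(\mu)|=|N(\alpha_{i-1})|^m\asymp D^{m\varepsilon}$, which is less than $\sqrt D/2$ for all sufficiently large $D$. Lemma \ref{norm implies alpha}(a) then produces some $j\geq 0$ and $n\in\mathbb Z$ (possibly after replacing $\alpha_j$ by $\alpha_j'$) with $\mu=n\alpha_j$. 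Taking norms and applying Proposition \ref{estimate norm} once more yields $u_{j+1}\asymp n^2 D^{1/2-m\varepsilon}$; setting $j(m):=j+1$, it suffices to bound $n$ by a constant depending only on $m$.

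For the bound on $n$, suppose a rational prime $\ell$ divides $n$. Then $\ell$ divides both integer coefficients of $\alpha_{i-1}^m$, so $(\ell)$ divides the ideal $(\alpha_{i-1})^m$ in $\mathcal O_K$. Decomposing $(\ell)$ into primes of $\mathcal O_K$ and using $\gcd(p_{i-1},q_{i-1})=1$, one checks that if $\ell$ is inert or split, the only way this can occur is if $\ell\mid\alpha_{i-1}$ in $\mathcal O_K$; for odd $\ell$ this coincides with divisibility in $\mathbb Z[\sqrt D]$ and contradicts coprimality of the convergents. Hence $\ell$ must be ramified, and a valuation count ($m\,v_{\mathfrak p}(\alpha_{i-1})\geq 2$) then shows the ramified prime above $\ell$ divides $\alpha_{i-1}$, putting us in case (1). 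The remaining subtlety is $\ell=2$ when $D\equiv 1\pmod 4$, where $2$ is unramified but can nevertheless divide $\mu$ in $\mathbb Z[\sqrt D]$ through the enlargement $\mathcal O_K\supsetneq\mathbb Z[\sqrt D]$; this is handled by passing instead to $\alpha_{i-1}/2\in\mathcal O_K$ (when both $p_{i-1},q_{i-1}$ are odd) and invoking Lemma \ref{norm implies alpha}(b) in place of (a), absorbing the bounded power of $2$ into the $m$-dependent constants of the $\asymp$ estimate.

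Finally, for $D=P\equiv 1\pmod 4$ prime, the only ramified prime of $\mathbb Q(\sqrt P)$ is $\mathfrak p=(\sqrt P)$, and $\mathfrak p\mid\alpha_{i-1}$ is equivalent to $P\mid p_{i-1}$, which would force $P\mid N(\alpha_{i-1})$. Since $0<|N(\alpha_{i-1})|\asymp P^{\varepsilon}<P$ for all sufficiently large $P$, this cannot occur, so case (1) is vacuous and case (2) holds. The principal obstacle throughout is the clean bound on $n$; the ramified-prime condition in (1) is designed precisely to absorb the primes that could spoil this bound, and part (b) of Lemma \ref{norm implies alpha} was stated in two parts exactly to accommodate the 2-adic subtlety in the $D\equiv 1\pmod 4$ case.
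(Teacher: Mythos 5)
Your argument is essentially the paper's own proof: estimate $|N(\alpha_{i-1})|\asymp D^{\varepsilon}$ via Proposition \ref{estimate norm}, pass to $\alpha_{i-1}^m$, apply Lemma \ref{norm implies alpha}, rule out nontrivial scalar factors $n$ by noting that any prime dividing $n$ would have to be ramified (inert and split primes being excluded by coprimality of $p_{i-1},q_{i-1}$), and then read off $u_{j(m)}$ from Proposition \ref{estimate norm} again, with the identical argument for prime $D=P\equiv 1\pmod 4$. The only (harmless) differences are that you absorb a bounded scalar $n$ into the implied constants rather than forcing $\alpha_{i-1}^m=\alpha_{j(m)-1}$ exactly, and you treat the prime $2$ in the $D\equiv 1\pmod 4$ case somewhat more explicitly than the paper does.
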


\begin{proof}
By Proposition \ref{estimate norm} we know that $N(\alpha_{i-1})\asymp D^{\varepsilon}$.
Assume that $\alpha_{i-1}$ is not divisible by any ramified prime. Since $p_{i-1}$ and $q_{i-1}$ are coprime, it is also not divisible by any inert prime, and so the only primes which divide it are split.
The same holds also for $\alpha_{i-1}^m$, and so $n\nmid \alpha_{i-1}^m$ for all $n\geq 2$.
We have that $N(\alpha_{i-1}^m)\asymp D^{m\varepsilon}\ll D^{1/2}$ and $\alpha_{i-1}^m>1$, hence by Lemma \ref{norm implies alpha}, $\alpha_{i-1}^m=\alpha_{j(m)-1}$ for some $j(m)$.
Then again by Proposition \ref{estimate norm} we conclude that $u_{j(m)}\asymp D^{1/2-m\varepsilon}$.


Finally, if $D=P\equiv 1\pmod 4$ is a prime, then the only ramified prime is $(P)=\mathfrak p^2$ and $N(\mathfrak p)=P$. 
Hence if $\mathfrak p\mid \alpha_{i-1}$, then $P=N(\mathfrak p)\leq N(\alpha_{i-1})\asymp P^{\varepsilon}$, which is not possible.
\end{proof}

Note that the proof actually shows how the constants implicit in $u_{j(m)}\asymp D^{1/2-m\varepsilon}$ depend on $m$: There are positive constants $c_\varepsilon$, $d_\varepsilon$ such that $$c_\varepsilon^m D^{1/2-m\varepsilon}<u_{j(m)}< d_\varepsilon^m D^{1/2-m\varepsilon}.$$

\

The assumption in Corollary \ref{corollary coefficients} on the existence of 
$u_i\asymp D^{1/2-\varepsilon}$ is unknown in general, but if the class number of $\mathbb Q(\sqrt D)$ is 1, 
then it follows from the Riemann hypothesis for $L(s,\chi_D)$ that there are many elements $\alpha$ with norm $\asymp D^{\varepsilon}$, and hence coefficients $u_i\asymp D^{1/2-\varepsilon}$ for any $0<\varepsilon<1/2$ -- see the proof of Proposition 5 in \cite{BK}.

\section{The construction}

We are finally ready to construct the continued fractions which give us many elements satisfying the assumptions of Proposition \ref{no universal forms}.

Given $M\in\mathbb N$, let $u_1, \dots, u_{s-1}$ be a symmetric sequence of integers such that 
\begin{enumerate}
\item $r:=\left\lceil \frac {s-1}2\right\rceil\geq 2M+1$,
\item the assumptions of Theorem \ref{friesen} are satisfied, and
\item $u_i$ is rapidly increasing, i.e., $u_1\geq 2$ and 
for $1\leq i\leq r$ we have $u_{i+1}\geq u_i^{3}$. 
\end{enumerate}

For example, one can take $s\equiv 2\pmod 3$ and $u_i=3^{3^{i-1}}$ for $1\leq i\leq r$.

\

Let us show that in this setting, the assumption of Proposition \ref{no universal forms} is satisfied:

\begin{prop}\label{kill off-diagonal} Assume that $(1)$, $(2)$, and $(3)$ above are satisfied and let $i, j$ be odd such that $1\leq i<j\leq r$. If $4\alpha_i\alpha_j\succeq c^2$ for $c\in\mathcal O_K$, then $c=0$.
\end{prop}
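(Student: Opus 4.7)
The plan is to decompose $4\alpha_i\alpha_j\succeq c^2$ into the two scalar inequalities $c^2\le 4\alpha_i\alpha_j$ and $(c')^2\le 4\alpha_i'\alpha_j'$, multiply them to bound $|N(c)|$ strongly enough to invoke Lemma~\ref{norm implies alpha}, and then dispatch each surviving shape of $c$ using the rapid growth of the $u_\ell$ from condition~(3).

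First, since $\alpha_i,\alpha_j$ are totally positive (as $i,j$ are odd), Proposition~\ref{estimate norm} gives
\[
N(c)^2\le 16\,N(\alpha_i)\,N(\alpha_j)<\frac{64\,D}{(u_{i+1}-0.5)(u_{j+1}-0.5)}.
\]
With $1\le i<j\le r$ both odd we have $j\ge 3$, and condition~(3) makes $u_{i+1}\ge 8$ and $u_{j+1}\ge 8^9$ at a minimum, so $|N(c)|<\sqrt D/8$; this brings $c$ into the scope of Lemma~\ref{norm implies alpha} in both discriminant cases. If $c\in\mathbb Z$, then $c^2=(c')^2\le 4\alpha_i'\alpha_j'<4/(u_{i+1}u_{j+1}q_iq_j)<1$ by Lemma~\ref{estimate fraction}, forcing $c=0$.

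Otherwise Lemma~\ref{norm implies alpha} gives $c=n\alpha_k$ or $c=n\alpha_k'$ for some $k\ge 0$ and positive (half-)integer $n$. The option $c=n\alpha_k'$ is immediately impossible, as $(c')^2=n^2\alpha_k^2\ge n^2\alpha_0^2>n^2 D$ cannot satisfy $(c')^2\le 4\alpha_i'\alpha_j'<4/(u_{i+1}u_{j+1})$. For $c=n\alpha_k$ I would split on the position of $k$. If $k>j$, then $\alpha_k\ge\alpha_{j+1}>u_{j+1}\alpha_j$ reduces $c^2\le 4\alpha_i\alpha_j$ to $n^2u_{j+1}^2<4\alpha_i/\alpha_j<4$, impossible since $u_{j+1}$ is enormous. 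If $k=j$, then $\alpha_j=u_j\alpha_{j-1}+\alpha_{j-2}>(u_j+1)\alpha_i$ (using $i\le j-2$ and that $\alpha_\ell$ is strictly increasing in $\ell$) converts $c^2\le 4\alpha_i\alpha_j$ into $u_j+1<4/n^2\le 16$, contradicting $u_j\ge u_{j-1}^3\ge 512$. If $k<j$, I would combine both bounds: using the two-sided estimate $q_\ell\sqrt D<\alpha_\ell<3q_\ell\sqrt D$, the first bound gives $n^2q_k^2<36\,q_iq_j$, while $(c')^2\le 4\alpha_i'\alpha_j'$ together with the lower bound $|\alpha_\ell'|>1/((u_{\ell+1}+2)q_\ell)$ from Lemma~\ref{estimate fraction} gives $n^2u_{i+1}u_{j+1}q_iq_j<4(u_{k+1}+2)^2q_k^2$; eliminating $q_k^2$ yields $(u_{k+1}+2)^2>n^4u_{i+1}u_{j+1}/144$, which together with $u_{k+1}\le u_j$ and $u_{j+1}\ge u_j^3$ would force $u_j$ to be bounded by an absolute constant, again contradicting~(3).

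The main obstacle is this last subcase, where one must simultaneously exploit the coarse bound $c^2\le 4\alpha_i\alpha_j$ (to control the size of $q_k$) and the delicate bound $(c')^2\le 4\alpha_i'\alpha_j'$ (to control $|\alpha_k'|$); the other cases are quick arithmetic once Lemma~\ref{norm implies alpha} restricts the form of $c$.
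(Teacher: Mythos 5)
Your proof is correct and follows essentially the same route as the paper: bound $|N(c)|$ via Proposition \ref{estimate norm} so that Lemma \ref{norm implies alpha} applies, rule out $c=n\alpha_k'$ using the conjugate inequality, and then contradict the rapid growth of the $u_\ell$ by comparing the index $k$ with $j$ --- your $k<j$ case is the paper's norm comparison in slight disguise (you re-derive the needed two-sided bound on $N(\alpha_k)$ from Lemma \ref{estimate fraction} instead of citing Proposition \ref{estimate norm} again). The only minor difference is your explicit treatment of $c\in\mathbb Z$, a degenerate case the paper's appeal to Lemma \ref{norm implies alpha} leaves implicit.
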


\begin{proof} 
We can assume that $c>0$.
We shall first prove that it suffices to consider $c=\frac 12\alpha_h$ for some $h$, then that $h<j$, and finally $h\geq j$, obtaining a contradiction.

a) $c=\frac 12\alpha_h$: Using Proposition \ref{estimate norm} we have 
$$|N(c)|=|cc\pr|\leq 4\sqrt{N(\alpha_i)N(\alpha_j)}<\frac {8\sqrt D}{\sqrt{(u_{i+1}-0.5)(u_{j+1}-0.5)}}<\frac{\sqrt D}{8}$$ 
(in the last inequality we used that $j\geq i+2\geq 3$, and hence $u_{j+1}\geq u_4\geq 2^{3^3}$),
and so we can apply Proposition \ref{norm implies alpha}. 
If $c=n\alpha_h\pr$ ($n\in\frac 12\mathbb N$), then 
$$1<\left(\frac 12 \alpha_1\right)^2\leq (n\alpha_i)^2=c\prd\leq 4\alpha_i\pr\alpha_j\pr<1$$ 
(the first inequality follows from the definition of $\alpha_1$ and the last one from Lemma \ref{estimate fraction}), which is not possible. 
Hence $c=n\alpha_h$. Then $4\alpha_i\alpha_j\succeq c^2=n^2\alpha_h^2\succeq\left(\frac 12 \alpha_h\right)^2$, and so we can assume that $n=\frac 12$ and $c=\frac 12\alpha_h$.

b) $h<j$: The sequence $\alpha_t$ is increasing, as we have $\alpha_{t+1}=u_{t+1}\alpha_t+\alpha_{t-1}>u_{t+1}\alpha_t$.
Hence if $h\geq j$, then $\alpha_h\geq \alpha_j$ and (since $j\geq i+2$) $$\alpha_h\geq \alpha_{i+2}>u_{i+2}\alpha_{i+1}>u_{i+2}u_{i+1}\alpha_{i}\geq 16\alpha_i.$$
This implies $\left(\frac 12 \alpha_h\right)^2>4\alpha_i\alpha_j$, a contradiction.

c) $h\geq j$: By Proposition \ref{estimate norm} we have 
$$\frac {\sqrt D}{2(u_{h+1}+2.5)}<|N(c)|\leq 4\sqrt{N(\alpha_i)N(\alpha_j)}<\frac {8\sqrt D}{\sqrt{(u_{i+1}-0.5)(u_{j+1}-0.5)}},$$
and so $16^2(u_{h+1}+2.5)^2>(u_{i+1}-0.5)(u_{j+1}-0.5)>u_{j+1}-0.5$. But this is not possible if $h<j$, because the sequence $u_t$ is rapidly increasing (and $j\geq 3$).
\end{proof}

Theorem \ref{main theorem} now follows directly from Propositions \ref{no universal forms} and \ref{kill off-diagonal}. 

\

Let us note that the constructed elements $\alpha_i$ have other interesting properties, for example when $D\equiv 2, 3\pmod 4$, then $\alpha_i$ is not a sum of totally positive elements 
and is irreducible (under a mild additional assumption on the size of $k$), i.e., $\alpha_i=xy$ for $x, y\in\mathcal O_K$ implies that $x$ or $y$ is a unit.

\

As we already mentioned in the introduction, one can formulate the main Theorem \ref{main theorem} also more generally for $\mathcal O_K$-lattices that are not necessarily free. Let us first briefly review the definitions: 

For a (totally positive) number field $K$, a \emph{quadratic space} is a pair $(V, Q)$, where $V$ is a finite-dimensional $K$-vector space and $Q$ is a quadratic form on $V$. An $\mathcal O_K$-module $(L, Q)$ is a (quadratic) \emph{$\mathcal O_K$-lattice} if it is a lattice in $(V, Q)$ of full rank. 
An $\mathcal O_K$-lattice $(L, Q)$ is \emph{totally positive} if $Q$ is a totally positive quadratic form, it is \emph{integral} if $Q(v)\in\mathcal O_K$ for all $v\in L$, and it is \emph{universal} if for each $0\prec a\in\mathcal O_K$ there is $v\in L$ such that $Q(v)=a$. 
In the proof of Proposition \ref{no universal forms} we have used the fact that to each quadratic form corresponds an $\mathcal O_K$-lattice. 
Such an $\mathcal O_K$-lattice is always free $\mathcal O_K$-module, but there exist also non-free $\mathcal O_K$-lattices in the case when $\mathcal O_K$ is not a principal ideal domain.

All of the arguments, especially (the second part of) the proof of Proposition \ref{no universal forms}, still apply almost verbatim in this setting and we obtain:

\begin{cor}\label{corollary lattices}
For each positive integer $M$ there are infinitely many real quadratic fields $\mathbb Q(\sqrt D)$ which do not admit totally positive integral universal $\mathcal O_K$-lattices of rank $M$.
\end{cor}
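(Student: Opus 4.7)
The plan is to revisit the proof of Proposition \ref{no universal forms} and verify that it goes through with only cosmetic changes once one drops the freeness of the $\mathcal O_K$-lattice; then apply it with the elements $\alpha_{2i-1}$ produced in Proposition \ref{kill off-diagonal}. First I would state the relevant generalization: under the hypothesis of Proposition \ref{no universal forms} on $a_1,\dots,a_M\in\mathcal O_K$, no totally positive integral $\mathcal O_K$-lattice $(L,Q)$ of rank at most $M-1$ is universal. Note that the combinatorial construction of the $a_i=\alpha_{2i-1}$ in Proposition \ref{kill off-diagonal} and the application of Theorem \ref{friesen} make no reference to the lattice at all, so once the generalized Proposition is in place, Corollary \ref{corollary lattices} is immediate.

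The one place where the original proof used freeness was the choice of an $\mathcal O_K$-basis $\ell_1,\dots,\ell_n$ in order to embed $L$ into $\mathbb R^n$ and use the standard inner product. I would replace this step by working with the ambient totally positive quadratic space $(V,Q)$ containing $L$, and with the bilinear form $B(x,y)=\tfrac12\bigl(Q(x+y)-Q(x)-Q(y)\bigr)$ on $V$. Since $Q(v)\in\mathcal O_K$ for all $v\in L$ (integrality), one has $B(v,w)\in\tfrac12\mathcal O_K$ for all $v,w\in L$. Because $Q$ is totally positive definite, for each real embedding $\sigma_h$ of $K$ the extension of $\sigma_h\circ B$ to $V\otimes_{K,\sigma_h}\mathbb R$ is a genuine positive definite inner product, so the Cauchy--Schwarz inequality holds there.

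By universality, choose $v_i\in L$ with $Q(v_i)=a_i$ for $1\leq i\leq M$, and write $B(v_i,v_j)=c/2$ with $c\in\mathcal O_K$. Applying Cauchy--Schwarz in each of the $N$ real completions gives $\sigma_h(c)^2\leq 4\sigma_h(a_i)\sigma_h(a_j)$ for every $h$, i.e.\ $4a_ia_j\succeq c^2$, so $c=0$ by the hypothesis on the $a_i$. Thus $v_1,\dots,v_M$ are pairwise orthogonal in $V$; since the form is totally positive definite they are nonzero and $K$-linearly independent, so $\dim_K(L\otimes_{\mathcal O_K}K)\geq M$, i.e.\ the rank of $L$ is at least $M$. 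Combined with Proposition \ref{kill off-diagonal} applied to the $\alpha_{2i-1}$ for $1\leq i\leq M+1$ and Theorem \ref{friesen}, this yields Corollary \ref{corollary lattices}. I do not foresee a genuine obstacle: the only subtlety is to notice that integrality alone (not freeness) already forces $B$ to take values in $\tfrac12\mathcal O_K$, after which the Cauchy--Schwarz argument is identical to the free case.
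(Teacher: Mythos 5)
Your proposal is correct and follows essentially the same route as the paper, which itself notes that the proof of Proposition \ref{no universal forms} applies almost verbatim to non-free $\mathcal O_K$-lattices and then combines it with Proposition \ref{kill off-diagonal} and Theorem \ref{friesen}. Your elaboration of the only freeness-dependent step (replacing the basis by the ambient quadratic space and applying Cauchy--Schwarz in each real completion) is exactly the intended adaptation.
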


\end{document}